\documentclass[11pt]{article}%
\usepackage{amssymb}
\usepackage{amsmath}
\usepackage{euler}
\usepackage{amsfonts}
\usepackage{graphicx}%
\providecommand{\U}[1]{\protect\rule{.1in}{.1in}}
\newtheorem{theorem}{Theorem}

\newtheorem{corollary}[theorem]{Corollary}

\newtheorem{lemma}[theorem]{Lemma}

\newtheorem{proposition}[theorem]{Proposition}

\newenvironment{proof}[1][Proof]{\noindent\textbf{#1.} }{\ \rule{0.5em}{0.5em}}
\begin{document}

\date{}
\title{\textsf{Locatedness, Convexity, and Measurability in }$\mathbb{R}^{N}$}
\author{\textsf{Douglas S. Bridges}}
\maketitle

\begin{abstract}%
\noindent
\textsf{We prove, within Bishop's constructive framework \cite{Bishop},
stronger forms of two theorems in \cite{Br88} about the locatedness of certain
Lebesgue measurable complemented sets in }$R^{N}$\textsf{. In doing so, we
expand and improve the proofs of several preliminary results on Lebesgue
measurability and convexity that are need en route to our main theorems.}

\end{abstract}

%

\setcounter{secnumdepth}{0}%
\normalfont\sf
%

\bigskip
%

\noindent
The framework of this paper\footnote{%
\normalfont\sf
\textbf{Keywords: \ }measurable, convex, located \ \textbf{MSC
classifications: \ }26E40, 03F60} is Bishop's constructive mathematics. We
assume some familiarity with, or at least access to, relevant parts of the
constructive integration theory found in Chapter 6 of \cite{BB}. Additional
sources for constructive analysis are \cite{Bishop}, \cite{BVtech}, and---the
most up-to-date reference---Ishihara's excellent Chapters 8 and 9 in
\cite{Handbook}.

Throughout the paper, $N$ will be a fixed positive integer, $\rho$ the
Euclidean metric on $\mathbb{R}^{N}$, and $\mu$ Lebesgue measure on
$\mathbb{R}^{N}$. The words\emph{ integrable} and \emph{measurable} will refer
exclusively to Lebesgue measure theory.

A \textsc{complemented set} in $\mathbb{R}^{N}$ has the form $\mathbf{A}%
\equiv\left(  A^{1},A^{0}\right)  $ of subsets of $\mathbb{R}^{N}$ such
that\footnote{%
\normalfont\sf
For real numbers $x,y$ we have $x\neq y$ if and only if $\left\vert
x-y\right\vert >0$. Constructively, $\neq$ is equivalent to the negation of
equality if and only if we accept Markov's principle, which is widely regarded
as of at least dubious constructive validity \cite[pp. 10--11]{BVtech}.}
$x\neq y$ whenever $x\in A^{1}$ and $y\in A^{0}$. The characteristic function
of $\mathbf{A}$ is then the mapping $\chi_{\mathbf{A}}$ with domain $A^{1}\cup
A^{0}$ such that $\chi_{\mathbf{A}}(x)=k$ if $x\in A^{k}$. We say that
$\mathbf{A}$ is integrable/measurable if $\chi_{A}$ is integrable/measurable,
in which case its domain is a full subset of $\mathbb{R}^{N}$. We say that
$\mathbf{A}$ \textsc{has positive (possibly infinite) measure} if it is
measurable and there exists an integrable complemented set $\mathbf{B}$ in
$\mathbb{R}^{N}$ such that $\mathbf{B}<\mathbf{A}$ (that is, $\chi
_{\mathbf{B}}\leqslant\chi_{\mathbf{A}}$) and $\mu(\mathbf{B})>0$.

Our objective is to present stronger versions of \cite[Thms 2 and 4]{Br88}---namely,

\begin{theorem}
\label{jul21t1}Let $\mathbf{S}$ be a measurable complemented set in
$\mathbb{R}^{N}$ with positive (possibly infinite) measure, such that $S^{1}$
is convex. Then $S^{1}$ is located in $\mathbb{R}^{N}$.
\end{theorem}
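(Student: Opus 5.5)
We must show that for each $x\in\mathbb{R}^{N}$ the distance $\rho(x,S^{1})=\inf\{\rho(x,s):s\in S^{1}\}$ exists. By the constructive criterion for locatedness, it suffices to show: given $x$ and real numbers $0<\alpha<\beta$, either there is a point of $S^{1}$ within $\beta$ of $x$, or every point of $S^{1}$ is at distance $>\alpha$ from $x$. The plan is to decide this using integrals, which are computable, combined with convexity, which turns ``positive measure near $x$'' into ``points of $S^{1}$ near $x$'' and ``no interior mass near $x$'' into ``no points near $x$''.

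\emph{First step: a ball of positive measure inside $S^{1}$.} Since $\mathbf{S}$ has positive measure, choose an integrable $\mathbf{B}<\mathbf{S}$ with $\mu(\mathbf{B})>0$. Then $B^{1}\subset S^{1}$ up to a null set, and $B^{1}$ contains $N+1$ points whose simplex has positive volume. (I would get these by an induction on dimension using Fubini: a set of positive measure cannot be almost contained in a hyperplane, and one can approximate by picking points in $B^{1}$ where the relevant determinant is bounded away from $0$.) By convexity the simplex spanned by these points lies in $S^{1}$, so $S^{1}$ contains a closed ball $\overline{B}(c,r)$ with $r>0$. This is the step I expect to be the main obstacle. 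Extracting actual points of $S^{1}$ with a quantitative affine-independence bound from mere positivity of an integral requires care, because the domain of $\chi_{\mathbf{S}}$ is only a full set.

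\emph{Second step: decide near/far.} Fix $x$ and $\alpha<\beta$. For $y\in\mathbb{R}^{N}$ let $C_{y}$ be the convex hull of $\overline{B}(c,r)\cup\{y\}$. If $y\in S^{1}$, then $C_{y}\subset S^{1}$, and $C_{y}\cap B(x,\gamma)$ has measure bounded below by an explicit positive constant $\delta$, depending only on $r$, $\rho(x,c)$ and $\gamma-\rho(x,y)$, whenever $\rho(x,y)<\gamma$. Choose $\gamma$ with $\alpha<\gamma<\beta$ and compute $m=\mu(\mathbf{S}\cap B(x,\gamma))$, which is finite and computable because the ball is integrable. Then either $m>\delta/2$ or $m<\delta$.
\begin{itemize}
\item If $m>\delta/2>0$, then $S^{1}\cap B(x,\gamma)$ has positive measure, so it contains a point $s$ with $\rho(x,s)<\gamma<\beta$.
\item If $m<\delta$, then no $y\in S^{1}$ has $\rho(x,y)<\alpha$, because such a $y$ would give $m\geq\delta$. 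Here $\delta$ must be chosen uniformly for all $y$ with $\rho(x,y)\leq\alpha$, which is possible since $\gamma-\alpha>0$.
\end{itemize}
Hence every $y\in S^{1}$ satisfies $\rho(x,y)\geq\alpha$, and the criterion yields locatedness.

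\emph{Third step: the volume estimate.} The uniform lower bound on $\mu(C_{y}\cap B(x,\gamma))$ is elementary geometry. Near $y$, the cone $C_{y}$ contains a truncated cone with vertex $y$, axis towards $c$, and half-aperture at least $\arcsin\bigl(r/(\rho(y,c)+r)\bigr)$. Its intersection with $B(y,\gamma-\alpha)\subset B(x,\gamma)$ therefore has measure at least a constant times $\bigl(r(\gamma-\alpha)/(\rho(x,c)+\alpha+r)\bigr)^{N}$.
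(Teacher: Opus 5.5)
Your proof is correct. Your first step is the paper's own (Lemmas~\ref{jul02l1}, \ref{0105a3} and \ref{jul17l1}). There, the next basis vector is found in $A^{1}\cap -V$, which is inhabited because $\mathbf{V}\wedge\mathbf{A}$ is null (Lemma~\ref{jul03l2}); membership of $-V$ is exactly your ``determinant bounded away from $0$''. The domain worry you raise disappears if you work with $\mathbf{B}\wedge\mathbf{S}$, which has measure $\mu(\mathbf{B})>0$ and first component $B^{1}\cap S^{1}$.

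After that you take a genuinely different route. The paper first assumes $S^{1}$ bounded and uses the \emph{uniform} density of $(S^{1})^{\circ}$ in $S^{1}$ (Lemma~\ref{0105a2}), which needs a global bound $R$ on $S^{1}$. It then takes a compact $K\subset S^{1}$ with $\mu(\mathbf{S}-\mathbf{K})$ below the volume of a $\delta$-ball (inner approximation by compact sets). From this it shows that a finite $\varepsilon$-approximation to $K$ is a $3\varepsilon$-approximation to $S^{1}$, so $S^{1}$ is totally bounded. Unbounded $S^{1}$ is handled by truncating with a ball $B(0,R)$ adapted to the test point $a$ and proving $\rho(a,S^{1})=\rho(a,A^{1})$.

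You instead verify the least-upper-bound criterion pointwise. You compute $\mu(\mathbf{S}\wedge\mathbf{B}(x,\gamma))$ and compare it with a threshold that need only be uniform over $y$ with $\rho(x,y)\leqslant\alpha$, so it depends on $\rho(x,c)+\alpha$ rather than on a bound for $S^{1}$. This removes the need for boundedness, for the compact inner approximation, and for the two-stage reduction. The paper's route yields explicit finite approximations in the bounded case, but you lose nothing, since bounded located subsets of $\mathbb{R}^{N}$ are totally bounded. The geometric core is the same in both: the convex-combination fact behind Lemma~\ref{jul14l1}, applied by you locally rather than globally.

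Two small points. First, the criterion you actually need, and prove, is ``either $\rho(x,s)\geqslant\alpha$ for all $s\in S^{1}$, or $\rho(x,s)<\beta$ for some $s\in S^{1}$'', not the $>\alpha$ of your opening sentence. Second, you need neither integrability of $C_{y}$ nor the cone picture, which is awkward when $y$ may lie in $\overline{B}(c,r)$. Put $t=(\gamma-\alpha)/(\rho(x,c)+\gamma+r)\in(0,1)$ and $w=(1-t)y+tc$. Then convexity gives $B(w,tr)\subset S^{1}\cap B(y,\gamma-\alpha)\subset S^{1}\cap B(x,\gamma)$ whenever $y\in S^{1}$ and $\rho(x,y)\leqslant\alpha$. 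Hence $\mathbf{B}(w,tr)<\mathbf{S}\wedge\mathbf{B}(x,\gamma)$, and you can take $\delta=\mu(\mathbf{B}(0,tr))$.
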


\begin{theorem}
\label{jul21t2}Let \textbf{$S$} be a measurable complemented set in
$\mathbb{R}^{N}$\emph{ }such that $S^{1}$ is convex and $%
\mathord{\sim}%
S^{1}\ $is inhabited. Then $%
\mathord{\sim}%
S^{1}$ is located in $\mathbb{R}^{N}$.
\end{theorem}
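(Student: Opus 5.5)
We have to prove Theorem \ref{jul21t2}: if $\mathbf{S}$ is measurable, $S^{1}$ is convex and $\mathord{\sim}S^{1}$ is inhabited, then for every $x\in\mathbb{R}^{N}$ the distance $\rho(x,\mathord{\sim}S^{1})=\inf\{\rho(x,y):y\in\mathord{\sim}S^{1}\}$ exists. Recall that $y\in\mathord{\sim}S^{1}$ means $y\neq s$ for all $s\in S^{1}$. The plan is a locatedness criterion. Fix $x$ and a point $y_{0}\in\mathord{\sim}S^{1}$. It suffices to show that for all rationals $0<\alpha<\beta$, either $\rho(x,\mathord{\sim}S^{1})<\beta$ (that is, some $y\in\mathord{\sim}S^{1}$ has $\rho(x,y)<\beta$) or $\rho(x,y)>\alpha$ for every $y\in\mathord{\sim}S^{1}$. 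The standard bisection argument then produces the infimum, with $\rho(x,y_{0})$ as an upper bound to start from.

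\textbf{Decision step.} Fix $\alpha<\beta$ and let $B$ be the closed ball $\overline{B}(x,\gamma)$ with $\alpha<\gamma<\beta$. Since $\mathbf{S}$ is measurable, $\mathbf{S}\cap B$ is integrable for all but countably many radii, so $\gamma$ can be chosen with $\mu(\mathbf{S}\cap B)$ defined. Then either
\[
\mu(\mathbf{S}\cap B)<\mu(B)\quad\text{or}\quad\mu(\mathbf{S}\cap B)>\mu(B)-\varepsilon,
\]
where $\varepsilon>0$ is small, chosen depending on $\alpha,\gamma$. We treat the two cases separately.

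\textbf{First case.} Here $\mu(B\cap S^{0})>0$, so by the Lebesgue theory we obtain a point $z\in B\cap S^{0}$. More usefully, we obtain a positive-measure set of such points. Such a $z$ lies in $\mathord{\sim}S^{1}$, since $S^{0}\subset\mathord{\sim}S^{1}$ by the definition of a complemented set. As $\rho(x,z)\leqslant\gamma<\beta$, the first alternative holds.

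\textbf{Second case.} Here $S^{1}$ fills $B$ up to measure $\varepsilon$. We use convexity to show that $S^{1}$ then contains every point of the smaller open ball $B(x,\alpha')$, for some $\alpha<\alpha'<\gamma$. The convexity lemma is as follows. Given $p$ with $\rho(x,p)<\alpha'$, look at the small ball $D$ around $p$ of radius $r$ with $\alpha'+r<\gamma$. If $\varepsilon<\mu(D)/2$, then for any direction, points of $S^{1}$ occur near the opposite ends of $D$. More precisely, choose $N+1$ small balls $D_{0},\dots,D_{N}$ near the vertices of a simplex around $p$ inside $B$, each of measure exceeding $\varepsilon$. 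Each $D_{i}$ contains a point $s_{i}\in S^{1}$ (points of $S^{1}$ are obtained from positive measure of $\mathbf{S}\cap D_{i}$). If the $D_{i}$ are small relative to the simplex, then $p$ lies in the convex hull of $s_{0},\dots,s_{N}$, so $p\in S^{1}$. Hence $B(x,\alpha')\subset S^{1}$. Then for $y\in\mathord{\sim}S^{1}$ and any $p$ with $\rho(x,p)<\alpha'$ we have $y\neq p$. It remains to convert this into $\rho(x,y)>\alpha$.

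\textbf{Main obstacle.} This conversion is the delicate constructive step, since $y\neq p$ for all $p$ in an open ball does not immediately give a positive lower bound. We would take $p$ on the segment from $x$ to $y$ at distance $\min(\rho(x,y),\alpha'')$ from $x$, with $\alpha<\alpha''<\alpha'$. Then $y\neq p$ forces $\rho(x,y)>\alpha''$ or $\rho(x,y)\neq\rho(x,p)$, and in the latter case $\rho(x,y)>\alpha''$ follows as well, because $\rho(x,y)<\alpha''$ would make $p=y$. A cleaner route is to apply the criterion with $\max$ in place of $\min$ and use the approximate-splitting property of reals. Getting the quantifiers uniform here, and ensuring that the points $s_{i}$ are genuinely extracted from the measure estimates, is where the care is needed. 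The extraction uses the fact that an integrable complemented set of positive measure has inhabited $1$-component.
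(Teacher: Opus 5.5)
Your argument is correct, and it takes a genuinely different and more elementary route than the paper's.

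The paper first reduces to $x\in S^{1}$. It then runs a binary sequence over the scales $1/n$, deciding between two outcomes:
\begin{itemize}
\item[--] $\mu(\mathbf{S}\wedge\mathbf{B}(x,n^{-1}))<v_{n}$, which supplies points of $S^{0}$ within $1/n$ of $x$;
\item[--] $\mu(\mathbf{S}\wedge\mathbf{B}(x,n^{-1}))>0$, in which case $\mathbf{S}$ has positive measure and Corollary \ref{jul17c1} makes all of $\mathord{\sim}S^{1}$ located at once.
\end{itemize}
That corollary rests on Theorem \ref{jul21t1} and on imported results about convex sets. A Cauchy sequence and a variant of the greatest-lower-bound principle then finish the proof.

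Your dichotomy is purely local and never needs global positive measure. For each $x$ and $\alpha<\beta$, either $S^{0}$ has a positive-measure piece in the ball of radius $\gamma$, or $S^{1}$ fills that ball up to $\varepsilon$. In the second case, a localized form of the idea behind Lemmas \ref{0105a3} and \ref{jul17l1} puts all of $B(x,\alpha')$ inside $S^{1}$: you extract points of $S^{1}$ near the vertices of a small simplex around each $p$.

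What your route buys: it uses only Lemma \ref{jul21l1}, the basic facts on integrable complemented sets, and the ordinary lower-bound criterion. It treats every $x$ directly rather than only $x\in S^{1}$. It is independent of Theorem \ref{jul21t1}, and also of the boundedness hypothesis under which Corollary \ref{jul17c1} is stated. What the paper's route buys is presenting Theorem \ref{jul21t2} as an extension of that corollary, reusing machinery already built.

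A few smaller points:
\begin{itemize}
\item[--] The step you flag as the main obstacle is immediate. If $\rho(x,y)<\alpha'$, then $y\in B(x,\alpha')\subset S^{1}$, so $y\neq y$, which is absurd. Hence $\rho(x,y)\geqslant\alpha'>\alpha$, and no segment construction is needed.
\item[--] $\mathbf{S}\wedge\mathbf{B}(x,\gamma)$ is integrable for every $\gamma$ by Lemma \ref{jul21l1}, so there are no radii to avoid.
\item[--] The perturbation claim, that $p$ lies in the convex hull of $s_{0},\ldots,s_{N}$ when each $s_{i}$ is within $\eta$ of the vertex $v_{i}$, should be written out. For example, use continuity of the barycentric coordinates via Cramer's rule. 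Make sure $\eta$, and hence $\varepsilon<\mu(\mathbf{B}(0,\eta))$, depends only on $N$ and $\gamma-\alpha'$, so that one $\varepsilon$ serves every $p$.
\end{itemize}
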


%

\noindent
Here,%
\[%
\mathord{\sim}%
S^{1}\equiv\left\{  x\in\mathbb{R}^{N}:x\neq s\text{ for all }s\in
S^{1}\right\}  ,
\]
the \textsc{complement} of $S^{1}$ in $\mathbb{R}^{N}$, not to be confused
with the \textsc{metric complement }of $S^{1}$ in $\mathbb{R}^{N}$,%
\[
-S^{1}\equiv\left\{  x\in\mathbb{R}^{N}:\exists_{r>0}\forall_{s\in S^{1}%
}\left(  \left\vert x-s\right\vert \geq r\right)  \right\}  .
\]
%

\medskip

We begin with some measure theory. By an $n$\textsc{-dimensional
rectangle}\footnote{%
\normalfont\sf
https://grokipedia.com/page/Hyperrectangle} we mean the Cartesian product
$S\equiv I_{1}\times\cdots\times I_{n}$ of $n$ compact intervals in
$\mathbb{R}$. It readily follows from Fubini's theorem \cite[Ch. 6, (9.7)]{BB}
that the complemented set $\left(  S,-S\right)  $ is integrable, with measure
$\left\vert I_{1}\right\vert \times\cdots\times\left\vert I_{n}\right\vert $,
where $\left\vert I\right\vert $ is the length of a compact interval in
$\mathbb{R}$. This measure is the hypervolume of the rectangle $S$.

\begin{lemma}
\label{jul03l1}Let $V$ be an $n$-dimensional subspace of $\mathbb{R}^{N}$,
where $1\leqslant n<N$. Then $\mathbf{V}\equiv\left(  V,-V\right)  $ is an
integrable complemented subset of $\mathbb{R}^{N}$, and $\mu(\mathbf{V})=0$.
\end{lemma}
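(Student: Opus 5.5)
We will reduce to the case of a coordinate subspace by an orthogonal change of variables, and then write $\mathbf{V}$ as a monotone limit of thin rectangles whose measures tend to $0$. First choose an orthonormal basis $e_{1},\ldots,e_{n}$ of $V$. Since $V$ is a finite-dimensional subspace it is located, so Gram--Schmidt applied to $e_{1},\ldots,e_{n}$ together with the standard basis lets us extend this to an orthonormal basis $e_{1},\ldots,e_{N}$ of $\mathbb{R}^{N}$: at each stage we choose a standard basis vector whose distance from the current span is positive, which is possible because the sum of the squared distances is a positive constant. Let $T$ be the orthogonal linear map sending $e_{k}$ to the $k$th standard basis vector. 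Lebesgue measure on $\mathbb{R}^{N}$ is invariant under orthogonal maps. This follows constructively from the invariance of the Riemann-type integral of continuous functions with compact support under linear maps of determinant $\pm1$, and hence, via the Daniell extension in \cite[Ch. 6]{BB}, for integrable functions. Therefore it suffices to treat $V=\mathbb{R}^{n}\times\{0\}^{N-n}$. Because $T$ is an isometry, $T(-V)=-T(V)$, so the complemented structure is preserved.

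For the coordinate case, fix $m$ and let $R_{m}$ be the rectangle $[-m,m]^{n}\times[-1/m,1/m]^{N-n}$. By the preceding remark $(R_{m},-R_{m})$ is integrable with measure $(2m)^{n}(2/m)^{N-n}$, which tends to $0$ because $N-n\geqslant1$. The rectangle $R_{m}$ is too fat, though, so we do not use it directly. Instead, for each fixed box $B_{k}=[-k,k]^{n}\times\mathbb{R}^{N-n}$ we consider the sets $\mathbf{C}_{k,j}=(R_{k,j},-R_{k,j})$, where $R_{k,j}=[-k,k]^{n}\times[-1/j,1/j]^{N-n}$. As $j\to\infty$ their characteristic functions decrease pointwise on the full set $V\cup-V$ to $\chi_{\mathbf{V}_{k}}$, where $\mathbf{V}_{k}=(V\cap[-k,k]^{N},\ -V\cup-[-k,k]^{N})$, roughly speaking. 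Their integrals $(2k)^{n}(2/j)^{N-n}\to0$. By the monotone convergence theorem \cite[Ch. 6]{BB}, $\mathbf{V}_{k}$ is integrable with measure $0$.

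We then pass to the limit $k\to\infty$. The sequence $\chi_{\mathbf{V}_{k}}$ increases to $\chi_{\mathbf{V}}$ on $V\cup-V$, and every integral is $0$. Monotone convergence again shows that $\mathbf{V}$ is integrable and $\mu(\mathbf{V})=0$. At each stage we must check that the pointwise limits agree with $\chi_{\mathbf{V}}$ on a full set. For $x\in-V$ there is an $r>0$ with $\rho(x,V)\geqslant r$, so $x\notin R_{k,j}$ once $1/j<r/\sqrt{N}$. This is exactly why the metric complement $-V$ is used in $\mathbf{V}$.

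The main obstacle is the rotation invariance step, since it has to be carried out constructively. If citing it is unsatisfactory, I would avoid it as follows. Choose coordinates so that $V$ is the graph of a linear map $L:\mathbb{R}^{n}\to\mathbb{R}^{N-n}$ over some $n$ coordinate axes. Such a choice of coordinates can be found by selecting a nonzero $n\times n$ minor of the basis matrix, which exists constructively because that matrix has a positive Gram determinant. Replace the rectangles by the slabs $\{(u,w):u\in[-k,k]^{n},\ |w-Lu|_{\infty}\leqslant1/j\}$. By Fubini's theorem \cite[Ch. 6, (9.7)]{BB}, each slab has measure $(2k)^{n}(2/j)^{N-n}$, because each $u$-fibre is a translated cube. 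This uses only translation invariance in one variable, and the rest of the argument goes through unchanged.
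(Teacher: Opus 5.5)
Your proof is correct. Its second stage is the paper's argument: $V$ is the increasing union of the null pieces $V\cap[-k,k]^{N}$, and one passes to the limit. The paper does this via \cite[Ch.~6, (3.10)]{BB}; you use monotone convergence, which is what underlies (3.10). The routes differ in two places.

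\emph{The null pieces.} The paper gets them for free. Since $\{0\}=[0,0]$ is a compact interval, $A_{k}=[-k,k]^{n}\times\{0\}^{N-n}$ is itself a rectangle, and the Fubini remark gives $\mu(\mathbf{A}_{k})=(2k)^{n}\cdot 0=0$ directly. Your $j\to\infty$ stage with the thin rectangles $R_{k,j}$ rederives this. It is redundant but harmless, and it avoids relying on the product formula for degenerate intervals.

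\emph{The reduction to a coordinate subspace.} The paper just says ``we may assume'' $V$ is a coordinate subspace, tacitly using invariance of $\mu$ under orthogonal maps; you actually supply that step. Your fallback is the more economical justification: write $V$ as the graph of a linear map $L$ over $n$ coordinates, chosen via a nonzero $n\times n$ minor (Cauchy--Binet). That needs only Fubini and translation invariance. There you could even use the degenerate slabs $\{(u,Lu):u\in[-k,k]^{n}\}$ directly, whose $u$-fibres are single points.

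Two repairs to the write-up. First, the aside about $R_{m}$ is false: $\mu\bigl((R_{m},-R_{m})\bigr)=2^{N}m^{2n-N}$, which need not tend to $0$ (it is constantly $4$ when $n=1$, $N=2$). You discard $R_{m}$ anyway, so delete the remark.

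Second, when identifying the monotone limits, the inclusions that matter are
\[
\bigcup_{j}-R_{k,j}\subset-V\cup-[-k,k]^{N}
\quad\text{and}\quad
\bigcap_{k}\bigl(-V\cup-[-k,k]^{N}\bigr)\subset-V .
\]
What you need is that $\chi_{\mathbf{V}}$ is defined and equals the limit on the domain of the integrable limit function, which is a full set. Agreement on $V\cup-V$ does not suffice, because the fullness of $V\cup-V$ is part of what is being proved. Both inclusions are easy. For the first, let $x''$ denote the last $N-n$ coordinates of $x$ and decide whether $\|x''\|_{\infty}>0$ or $\|x''\|_{\infty}<1/j$. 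For the second, note that each $x$ lies in $[-k,k]^{N}$ for large $k$.

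The check you wrote is the converse direction: that a point of $-V$ is eventually at positive distance from $R_{k,j}$. Note also that $x\notin R_{k,j}$, as you stated it, is weaker than the positive distance your argument actually gives and needs.
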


\begin{proof}
Let $\left\{  e_{1},\ldots,e_{N}\right\}  $ be the standard orthonormal basis
of $\mathbb{R}^{N}$. We may assume that $V=\mathsf{span}\left\{  e_{1}%
,\ldots,e_{n}\right\}  $. For each positive integer $k$,%
\[
A_{k}\equiv\underset{n~\text{\textsf{terms}}}{\underbrace{\left[  -k,k\right]
\times\cdots\times\left[  -k,k\right]  }}\times\underset{N-n\text{
\textsf{terms}}}{\underbrace{\left\{  0\right\}  \times\cdots\times\{0\}}}%
\]
is a rectangle in $\mathbb{R}^{N}$, so, referring to the definition above, we
see that the complemented set%
\[
\mathbf{A}_{k}\equiv\left(  A_{k},-A_{k}\right)
\]
is integrable, with measure $0$. Since $\sum_{k=1}^{\infty}\mu(\mathbf{A}%
_{k})=0$, it follows from \cite[Ch. 6, (3.10)]{BB} that $%
{\displaystyle\bigvee\nolimits_{k\geqslant1}}
\mathbf{A}_{k}$ is integrable and $\mu\left(
{\displaystyle\bigvee\nolimits_{k\geqslant1}}
\mathbf{A}_{k}\right)  =0$. It is routine to verify that $%
{\displaystyle\bigvee\nolimits_{k\geqslant1}}
\mathbf{A}_{k}=(V,-V)$.
\end{proof}

\begin{lemma}
\label{jul03l2}Let $\mathbf{A}$ be an integrable complemented set in
$\mathbb{R}^{N}$. Let $V$ be an $n$-dimensional subspace of $\mathbb{R}^{N}$,
where $1\leqslant n<N$, and let $\mathbf{V}$ be the complemented set $\left(
V,-V\right)  $. Then $\mathbf{V}\wedge\mathbf{A}$ is integrable, with measure
$0$.
\end{lemma}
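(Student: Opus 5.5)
By Lemma \ref{jul03l1}, $\mathbf{V}=(V,-V)$ is an integrable complemented set with $\mu(\mathbf{V})=0$. Since $\mathbf{A}$ is integrable by hypothesis, the plan is to invoke the standard fact from Chapter 6 of \cite{BB} that the class of integrable complemented sets is closed under finite intersections. I then bound the measure of the intersection by monotonicity of the integral.

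First, I record that $\chi_{\mathbf{V}\wedge\mathbf{A}}=\chi_{\mathbf{V}}\wedge\chi_{\mathbf{A}}=\min\{\chi_{\mathbf{V}},\chi_{\mathbf{A}}\}$ on the common domain. This is a full set, because it is the intersection of the two full domains. The integrable functions are closed under the lattice operation $\wedge$ (see \cite[Ch.~6]{BB}), so $\chi_{\mathbf{V}\wedge\mathbf{A}}$ is integrable. Hence $\mathbf{V}\wedge\mathbf{A}$ is an integrable complemented set.

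Second, I note that $0\leqslant\chi_{\mathbf{V}\wedge\mathbf{A}}\leqslant\chi_{\mathbf{V}}$ on a full set. Monotonicity of the integral then gives
\[
0\leqslant\mu(\mathbf{V}\wedge\mathbf{A})=\int\chi_{\mathbf{V}\wedge\mathbf{A}}\leqslant\int\chi_{\mathbf{V}}=\mu(\mathbf{V})=0 ,
\]
so $\mu(\mathbf{V}\wedge\mathbf{A})=0$.

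The only step needing care is the first: making sure the relevant closure property in \cite{BB} is stated for complemented sets, or equivalently for characteristic functions, so that it can be cited directly. If it is available only as closure of integrable functions under $\min$, I will argue directly as above. In that case I will check that $\chi_{\mathbf{V}}\wedge\chi_{\mathbf{A}}$ is indeed the characteristic function of $(V\cap A^{1},\,(-V)\cup A^{0})$ on the intersection of the domains, which is routine. An alternative route avoiding the lattice property is to write $\chi_{\mathbf{V}\wedge\mathbf{A}}=\chi_{\mathbf{V}}\chi_{\mathbf{A}}$ and use that the product of an integrable characteristic function with a bounded measurable function is integrable. I expect the direct lattice argument to suffice.
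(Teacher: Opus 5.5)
Your proof is correct and follows the paper's argument: Lemma \ref{jul03l1} gives $\mu(\mathbf{V})=0$, closure of integrable complemented sets under $\wedge$ (the paper simply cites \cite[Ch.~6, (3.5)]{BB}) gives integrability of $\mathbf{V}\wedge\mathbf{A}$, and monotonicity (\cite[Ch.~6, (3.7)]{BB}) gives $\mu(\mathbf{V}\wedge\mathbf{A})\leqslant\mu(\mathbf{V})=0$. Your unpacking in terms of $\chi_{\mathbf{V}}\wedge\chi_{\mathbf{A}}$ on the intersection of the domains is just the reasoning behind those two cited facts.
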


\begin{proof}
By Lemma \ref{jul03l1}, $\mathbf{V}$ is integrable and $\mu(\mathbf{V})=0$; so
$\mathbf{V}\wedge\mathbf{A}$ is integrable, by \cite[Ch.6, (3.5)]{BB};
$\mu\left(  \mathbf{V}\wedge\mathbf{A}\right)  \leqslant\mu(\mathbf{V})$, by
\cite[Ch. 6, (3.7)]{BB}; and therefore $\mu\left(  \mathbf{V}\wedge
\mathbf{A}\right)  =0$.
\end{proof}

\begin{lemma}
\label{jul21l1}Let $\mathbf{A}$ and $\mathbf{B}$ be, respectively, a
measurable and an integrable complemented set in $\mathbb{R}^{N}$. Then
$\mathbf{A\wedge B}$ is integrable and $\mu(\mathbf{A\wedge B})\leqslant
\mu(\mathbf{B})$.
\end{lemma}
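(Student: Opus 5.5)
Bishop--Bridges define a complemented set $\mathbf{A}$ to be measurable when $\mathbf{A}\wedge\mathbf{C}$ is integrable for every integrable complemented set $\mathbf{C}$; equivalently, $\chi_{\mathbf{A}}$ is a measurable function. I will work directly from this definition, so the plan is almost immediate.

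\textbf{Integrability.} First I would recall the measurability criterion of \cite[Ch.~6]{BB}: a function $f$ is measurable if and only if $\min\{\max\{f,-g\},g\}$ is integrable for every nonnegative integrable $g$. Taking $g=\chi_{\mathbf{B}}$ and $f=\chi_{\mathbf{A}}$ gives
\[
\min\{\max\{\chi_{\mathbf{A}},-\chi_{\mathbf{B}}\},\chi_{\mathbf{B}}\}=\min\{\chi_{\mathbf{A}},\chi_{\mathbf{B}}\}=\chi_{\mathbf{A}\wedge\mathbf{B}} .
\]
The first equality holds because $\chi_{\mathbf{A}}\geqslant0$. The second equality is checked pointwise on the domain $(A^{1}\cup A^{0})\cap(B^{1}\cup B^{0})$ of $\chi_{\mathbf{A}\wedge\mathbf{B}}$. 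That domain is a full set, being the intersection of two full sets. Hence $\mathbf{A}\wedge\mathbf{B}$ is integrable.

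If the paper's official definition of ``measurable complemented set'' is instead the set version, this step is simply that definition with $\mathbf{C}=\mathbf{B}$.

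\textbf{The inequality.} Next I would note that $\chi_{\mathbf{A}\wedge\mathbf{B}}\leqslant\chi_{\mathbf{B}}$ on the full set where both functions are defined. Since both are integrable, monotonicity of the integral \cite[Ch.~6]{BB} gives
\[
\mu(\mathbf{A}\wedge\mathbf{B})=\int\chi_{\mathbf{A}\wedge\mathbf{B}}\leqslant\int\chi_{\mathbf{B}}=\mu(\mathbf{B}).
\]
Alternatively, I could cite \cite[Ch.~6, (3.7)]{BB}, the same result used in Lemma~\ref{jul03l2}, once integrability of $\mathbf{A}\wedge\mathbf{B}$ is known.

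\textbf{Main obstacle.} The only delicate point is bookkeeping. I need to confirm that the measurability criterion in \cite{BB} is stated for general measurable functions, so that it applies to $\chi_{\mathbf{A}}$. I also need to confirm that the identity $\min(\chi_{\mathbf{A}},\chi_{\mathbf{B}})=\chi_{\mathbf{A}\wedge\mathbf{B}}$ holds on a full set rather than everywhere. Since integrable functions are only determined up to a null set, agreement on a full set suffices.
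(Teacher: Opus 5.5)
Your proof is correct, and it reaches integrability more directly than the paper does. The paper's proof is a bare citation. It observes that $\mathbf{B}$, being integrable, is also measurable, and then invokes \cite[Ch.~6, (7.7) and (7.11)]{BB}. In effect, $\mathbf{A}\wedge\mathbf{B}$ is measurable as a meet of two measurable sets, and it is dominated by the integrable set $\mathbf{B}$; integrability and the bound $\mu(\mathbf{A}\wedge\mathbf{B})\leqslant\mu(\mathbf{B})$ follow from that.

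You instead apply the truncation characterization of measurability once, with $g=\chi_{\mathbf{B}}$, and note that the truncated function is exactly $\chi_{\mathbf{A}\wedge\mathbf{B}}$. This uses neither the fact that integrable sets are measurable nor a domination lemma. It does rely on that characterization being available in \cite{BB} in a form you can quote; since $\chi_{\mathbf{A}}$ is bounded, any standard truncation form gives the same identity. The inequality then comes from ordinary monotonicity in both versions.

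One small correction: the domain of $\chi_{\mathbf{A}\wedge\mathbf{B}}$ is $(A^{1}\cap B^{1})\cup A^{0}\cup B^{0}$, not $(A^{1}\cup A^{0})\cap(B^{1}\cup B^{0})$, and it can be strictly larger. The identity $\min\{\chi_{\mathbf{A}},\chi_{\mathbf{B}}\}=\chi_{\mathbf{A}\wedge\mathbf{B}}$ holds on the smaller set, which is full. Your closing remark that agreement on a full set suffices is exactly what makes this harmless.
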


\begin{proof}
Since $\mathbf{B}$ is also measurable, this is a simple consequence of
\cite[Ch. 6, (7.7) and (7.11)]{BB}.
\end{proof}

%

\medskip

For each $x\in\mathbb{R}^{N}$ and each $r>0$, denote by $B(x,r)$ the open ball
in $\mathbb{R}^{N}$ with centre $x$ and radius $r$, and define the
complemented set%
\[
\mathbf{B}(x,r)\equiv\left(  B(x,r),-B(x,r)\right)  .
\]
Then $\mathbf{B}(x,r)$ is integrable, $\mu(\mathbf{B}(x,r))$ being the
hypervolume of a ball of radius $r$ in $\mathbb{R}^{N}$.

\begin{lemma}
\label{jul22l1}Let $\mathbf{S}$ be a measurable complemented set in
$\mathbb{R}^{N}$ such that $S^{1}$ is bounded. Then $\mathbf{S}$ is integrable.
\end{lemma}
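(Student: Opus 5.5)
Since $S^{1}$ is bounded, I will first pick $x\in\mathbb{R}^{N}$ (say $x=0$) and a radius $r>0$ with $S^{1}\subset B(0,r)$. For example, take $r$ larger than a bound $R$ for $\left\vert s\right\vert$ on $S^{1}$, which is available constructively because boundedness gives an explicit $R$. The plan is to show that $\mathbf{S}$ coincides, as a complemented set and up to a null set, with $\mathbf{S}\wedge\mathbf{B}(0,r)$. Lemma \ref{jul21l1}, applied with $\mathbf{A}=\mathbf{S}$ (measurable) and $\mathbf{B}=\mathbf{B}(0,r)$ (integrable), says that $\mathbf{S}\wedge\mathbf{B}(0,r)$ is integrable. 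It then remains to transfer integrability to $\mathbf{S}$ itself.

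To make the comparison, I will work with characteristic functions. By definition, $\chi_{\mathbf{S}\wedge\mathbf{B}(0,r)}=\chi_{\mathbf{S}}\wedge\chi_{\mathbf{B}(0,r)}=\min(\chi_{\mathbf{S}},\chi_{\mathbf{B}(0,r)})$ on the intersection of their domains. The domain of $\chi_{\mathbf{B}(0,r)}$ is $B(0,r)\cup-B(0,r)$, which omits the sphere $\{y:\left\vert y\right\vert =r\}$; this is a null set, though, and in any case the domain is full. The domain of $\chi_{\mathbf{S}}$ is full since $\mathbf{S}$ is measurable. Take $y$ in both domains. If $y\in S^{1}$, then $\left\vert y\right\vert \leqslant R<r$, so $y\in B(0,r)$ and both functions equal $1$. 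If $y\in S^{0}$, then $\chi_{\mathbf{S}}(y)=0$ and the minimum is also $0$. Hence $\chi_{\mathbf{S}}=\chi_{\mathbf{S}\wedge\mathbf{B}(0,r)}$ on a full set. A function that agrees on a full set with an integrable function is integrable, with the same integral (\cite[Ch. 6]{BB}), so $\chi_{\mathbf{S}}$ is integrable, that is, $\mathbf{S}$ is integrable.

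The main obstacle is the bookkeeping about domains: I must check that the equality holds on a genuinely full set and that the cited result on functions equal almost everywhere applies in the complemented-set setting. Here I would rely on the fact that the intersection of two full sets is full, and that in \cite{BB} integrability of a complemented set means integrability of its characteristic function, so nothing beyond the a.e.\ equality principle is needed. An alternative route, if that principle is awkward to cite, is to note $\chi_{\mathbf{S}}=\chi_{\mathbf{S}}\cdot\chi_{\mathbf{B}(0,r)}$ on a full set and use the result that a measurable function bounded by an integrable function is integrable (\cite[Ch. 6, (7.x)]{BB}), taking $\left\vert \chi_{\mathbf{S}}\right\vert \leqslant\chi_{\mathbf{B}(0,r)}$ almost everywhere.
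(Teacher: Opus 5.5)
Your proposal is correct and follows the paper's proof: enclose $S^{1}$ in a ball $B(0,r)$, apply Lemma \ref{jul21l1} to $\mathbf{S}\wedge\mathbf{B}(0,r)$, and identify the result with $\mathbf{S}$. The paper simply writes $\mathbf{S}=\mathbf{S}\wedge\mathbf{B}(0,r)$; your check that $\chi_{\mathbf{S}}$ and $\chi_{\mathbf{S}\wedge\mathbf{B}(0,r)}$ agree on a full set, combined with the a.e.-equality principle, is exactly the justification that equality implicitly relies on.
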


\begin{proof}
There exists $r>0$ such that $S^{1}\subset B(0,r)$. Then $\mathbf{S}%
=\mathbf{S\wedge B}(0,r)$ and is therefore integrable, by Lemma \ref{jul21l1}.
\end{proof}

\begin{lemma}
\label{jul02l1}Let $\mathbf{A}$ be a measurable complemented set in
$\mathbb{R}^{N}$ with positive (possibly infinite) measure. Then $A^{1}$
contains a basis of $\mathbb{R}^{N}$.
\end{lemma}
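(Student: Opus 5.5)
Let $\mathbf{B}$ be an integrable complemented set with $\mathbf{B}<\mathbf{A}$ and $\mu(\mathbf{B})>0$; then $B^{1}\subset A^{1}$, so it suffices to construct $N$ linearly independent vectors in $B^{1}$. We choose them one at a time, by induction on $k=0,1,\ldots,N$, producing $x_{1},\ldots,x_{k}\in B^{1}$ that are linearly independent in the constructive sense. Concretely, we keep the determinant of the Gram matrix of $x_{1},\ldots,x_{k}$ positive; equivalently, the distance from $x_{j}$ to the span of $x_{1},\ldots,x_{j-1}$ is positive for each $j$. The case $k=0$ is trivial, and the case $k=N$ gives the required basis.

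For the inductive step, suppose $k<N$ and $x_{1},\ldots,x_{k}$ are already chosen. Enlarge them to an $(N-1)$-dimensional subspace $V$ by adding suitable standard basis vectors; when $k=N-1$, take $V$ to be the span itself. (If $k=0$ we still need some $x_{1}\in B^{1}$ with $x_{1}\neq0$; the same argument with $V$ any hyperplane through $0$ handles this.) By Lemma \ref{jul03l2}, $\mathbf{V}\wedge\mathbf{B}$ is integrable with measure $0$. Since $\mu(\mathbf{B})>0$, we have $\mu(\mathbf{B})>\mu(\mathbf{V}\wedge\mathbf{B})$, and so $\mu(\mathbf{B}\wedge(-\mathbf{V}))>0$, where $-\mathbf{V}=(-V,V)$ denotes the complement of $\mathbf{V}$. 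This uses additivity, $\mu(\mathbf{B})=\mu(\mathbf{B}\wedge\mathbf{V})+\mu(\mathbf{B}\wedge-\mathbf{V})$, from \cite[Ch. 6, \S3]{BB}, together with the integrability of these complemented sets.

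A complemented set of positive measure has inhabited first component. Its characteristic function is defined on a full set, and if every point of the domain lay in the $0$-component the measure would be $0$; constructively, one instead applies the result in \cite[Ch. 6]{BB} that an integrable set with positive measure has a point in its $1$-component. This yields $x_{k+1}\in B^{1}\cap(-V)$. Then $x_{k+1}$ lies at positive distance $r$ from $V$, and hence from $\mathsf{span}\{x_{1},\ldots,x_{k}\}\subset V$. This is exactly the positive-distance condition, so $x_{1},\ldots,x_{k+1}$ are linearly independent.

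I expect the main obstacle to be the constructive bookkeeping in two places. The first is the measure subtraction: we must check that $\mathbf{B}\wedge(-\mathbf{V})$ is integrable and that its measure equals $\mu(\mathbf{B})-\mu(\mathbf{B}\wedge\mathbf{V})$. The second is the extraction of a point in the $1$-component of a set of positive measure; this is where the positivity of $\mu(\mathbf{B})$, rather than its mere nonzeroness, is essential. A minor further point is that Lemma \ref{jul03l2} requires $n\geqslant1$. This holds whenever $N\geqslant2$, since $V$ has dimension $N-1$. When $N=1$, $V=\{0\}$ and the claim that $\mu(\{0\},-\{0\})=0$ is immediate, because $\{0\}$ is a degenerate rectangle.
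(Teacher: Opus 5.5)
Your proof is correct and follows the paper's argument. You pass to an integrable $\mathbf{B}<\mathbf{A}$ of positive measure, use Lemma \ref{jul03l2} and \cite[Ch. 6, (3.6)]{BB} to get positive measure for $\mathbf{B}\wedge(-\mathbf{V})$, extract a point of $B^{1}\cap -V$ via \cite[Ch. 6, (3.4)]{BB}, and conclude linear independence from its positive distance to $V$. The differences are cosmetic, with one small caveat: padding the span to a hyperplane is unnecessary, since Lemma \ref{jul03l2} applies directly to $V=\mathsf{span}\{x_{1},\ldots,x_{k}\}$ when $1\leqslant k<N$, and the padding would itself need a constructive justification for choosing the standard basis vectors; the paper also gets its first nonzero vector from two points $a\neq b$ of $A^{1}$ rather than from a hyperplane.
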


\begin{proof}
Suppose first that $\mathbf{A}$ is integrable, so that $\mu(\mathbf{A})>0$. By
\cite[Ch. 6, (3.4)]{BB}, $A^{1}$ is inhabited by some point $a$. Since
$\left(  \left\{  a\right\}  ,-\left\{  a\right\}  \right)  $ is integrable,
with measure $0$, we see from \cite[Ch. 6, (3.6)]{BB} that $\mathbf{A}-\left(
\left\{  a\right\}  ,-\left\{  a\right\}  \right)  $ is integrable and has
measure%
\[
\mu(\mathbf{A})-\mu\left(  \left(  \left\{  a\right\}  ,-\left\{  a\right\}
\right)  \right)  =\mu(\mathbf{A})>0\text{.}%
\]
Thus, by \cite[Ch. 6, (3.4)]{BB}, $A^{1}-\left\{  a\right\}  $ is inhabited by
some point $b$. Since $a\neq b$, it follows that $A^{1}$ contains a nonzero
point $e_{1}$ of $\mathbb{R}^{N}$. Now assume that for some $n<N$, we have
constructed linearly independent vectors $e_{1},\ldots,e_{n}$ belonging to
$A^{1}$. Let $V$ be the $n$-dimensional subspace of $\mathbb{R}^{N}$ spanned
by these vectors, and let $\mathbf{V}=\left(  V,-V\right)  $. By Lemma
\ref{jul03l2}, $\mathbf{V}\wedge\mathbf{A}$ is $\mu$-integrable and
$\mu(\mathbf{V}\wedge\mathbf{A})=0$. By \cite[Ch. 6, (3.6)]{BB},
$\mathbf{A}-\mathbf{V}\wedge\mathbf{A}$ is $\mu$-integrable, and%
\[
\mu(\mathbf{A}-\mathbf{V}\wedge\mathbf{A})=\mu(\mathbf{A})-\mu(\mathbf{V}%
\wedge\mathbf{A})>0.
\]
Hence, again by \cite[Ch. 6, (3.6)]{BB}, there exists $e_{n+1}\in A^{1}\cap
-V$. Then $\rho(e_{n+1},V)>0\,$, so by Lemma 4.1.10 of \cite{BVtech}, the
vectors $e_{1},\ldots,e_{n+1}$ in $A^{1}$ are linearly independent. The
desired conclusion for integrable $\mathbf{A}$ now follows by induction. In
the general case, $\mathbf{A}$ has an integrable complemented subset
$\mathbf{B}$ with $\mu(\mathbf{B})>0$, so by the foregoing, $B^{1}$, and
therefore $A^{1}$, contains a basis of $\mathbb{R}^{N}$.
\end{proof}

%

\medskip

Turning now to convexity, for convenience we state here Lemma 5.1.1 of
\cite{BVtech}.

\begin{lemma}
\label{jul14l1}Let $C$ be a convex subset of a normed space $X$, let $\xi\in
C^{\circ}$, and let $r>0$ be such that $B(\xi,r)\subset C.$ Let $x\neq\xi,$
$0<t<1,$ and $y=t\xi+\left(  1-t\right)  x.$ If the ball $B(x,tr)$ intersects
$C,$ then $B(y,t^{2}r)\subset C$.
\end{lemma}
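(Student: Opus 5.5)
The plan is a direct convexity argument: write an arbitrary point of $B(y,t^{2}r)$ as a convex combination, with weights $t$ and $1-t$, of a point of $B(\xi,r)$ and a point of $C$ close to $x$. No measure theory or earlier lemma is needed, only convexity of $C$ and the triangle inequality.

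First, since $B(x,tr)$ intersects $C$, choose $c\in C$ with $\left\Vert x-c\right\Vert <tr$. Next, fix an arbitrary $z\in B(y,t^{2}r)$, so that $\left\Vert z-y\right\Vert <t^{2}r$. Since $0<t<1$, we may define%
\[
w\equiv\frac{1}{t}\left(  z-(1-t)c\right)  ,
\]
so that $z=tw+(1-t)c$ by construction. It then suffices to show that $w\in B(\xi,r)$. Once that holds, $w\in C$ by hypothesis, and $z\in C$ by convexity, because $c\in C$.

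The key computation uses $y=t\xi+(1-t)x$:%
\[
w-\xi=\frac{1}{t}\left(  z-t\xi-(1-t)c\right)  =\frac{1}{t}\left(
(z-y)+(1-t)(x-c)\right)  .
\]
Hence%
\[
\left\Vert w-\xi\right\Vert \leqslant\frac{1}{t}\left(  \left\Vert
z-y\right\Vert +(1-t)\left\Vert x-c\right\Vert \right)  <\frac{1}{t}\left(
t^{2}r+(1-t)tr\right)  =tr+(1-t)r=r .
\]
Because both input inequalities are strict, the final inequality is strict, so $w\in B(\xi,r)\subset C$, as required.

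I expect no real obstacle. The only point requiring care is choosing the right decomposition of $z$, namely solving $z=tw+(1-t)c$ for $w$, and then checking that the radii $t^{2}r$ and $tr$ combine to give exactly $r$. Everything used is constructively valid: the choice of $c$ comes from the inhabitedness of $B(x,tr)\cap C$, and the estimates are plain norm inequalities. The hypothesis $x\neq\xi$ is not actually needed for this argument.
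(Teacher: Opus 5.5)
Your proof is correct and complete, and every step is constructively valid. You pick $c\in C\cap B(x,tr)$ and set $w=t^{-1}(z-(1-t)c)$. The identity $w-\xi=t^{-1}\bigl((z-y)+(1-t)(x-c)\bigr)$ then gives $\left\Vert w-\xi\right\Vert<t^{-1}\bigl(t^{2}r+(1-t)tr\bigr)=r$, so $w\in C$ and $z=tw+(1-t)c\in C$.

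The paper gives no proof of this lemma to compare against. It simply quotes it as Lemma 5.1.1 of Bridges and V\^{\i}\c{t}\u{a}'s \emph{Techniques of Constructive Analysis}, so your direct convexity computation is a self-contained verification of a result the paper only cites.

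You are right that $x\neq\xi$ is never used, and this matters slightly for the paper. In the proof of Lemma~\ref{0105a2}, the case split between $\left\Vert x-\xi\right\Vert<r/4$ and $x\neq\xi$ is there only to meet that hypothesis. With your version one can take $z=t\xi+(1-t)x$ directly for every $x\in C$. In fact, since $x\in C$ there, your computation with $c=x$ even gives $B(z,tr)\subset C$.
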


%

\medskip

Let $S$ be a subset of a metric space $X$. We say that $S^{\circ}$
is\textbf{\ }\textsc{uniformly dense in} $S$ if for each $\varepsilon>0$ there
exists $\delta>0$ such that if $x\in S$, then $S\cap B(x,\varepsilon)$
contains a ball in $X$ of radius $\delta$. In that case, $S^{\circ}$ is
clearly dense in $S$.

The proofs of the next two lemmas improve upon those of \cite[Lemmas 9 and
2]{Br88}.

\begin{lemma}
\label{0105a2}Let $C$ be a bounded convex set with inhabited interior in a
normed space. Then $C^{\circ}$ is uniformly dense in $C$ \emph{\cite[Lemma
9]{Br88}}.
\end{lemma}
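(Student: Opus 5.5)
We use Lemma \ref{jul14l1}, applied with a single fixed interior point. Since $C^{\circ}$ is inhabited, fix $\xi\in C^{\circ}$ and $r>0$ with $B(\xi,r)\subset C$. Since $C$ is bounded, there is $M>0$ such that $\Vert x-\xi\Vert\leqslant M$ for every $x\in C$. Given $\varepsilon>0$, we look for a $\delta>0$ that depends only on $\varepsilon$, $r$ and $M$, and not on the particular point $x\in C$.

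Fix $t\in(0,1)$ with $tM<\varepsilon/2$; for instance $t=\min\{1/2,\varepsilon/(4M)\}$. Set $\delta=t^{2}r$, and if necessary shrink $t$ further so that $\delta<\varepsilon/2$. Now let $x\in C$ and put $y=t\xi+(1-t)x$. Then
\[
\Vert y-x\Vert=t\Vert\xi-x\Vert\leqslant tM<\varepsilon/2 .
\]
If $x\neq\xi$, Lemma \ref{jul14l1} applies, because the ball $B(x,tr)$ meets $C$ (it contains $x$ itself). It gives $B(y,t^{2}r)\subset C$. Every point $z$ of this ball satisfies $\Vert z-x\Vert<\delta+\varepsilon/2<\varepsilon$, so $B(y,\delta)\subset C\cap B(x,\varepsilon)$, which is what we want.

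The main obstacle is constructive. We cannot decide whether $x\neq\xi$ or $x=\xi$, so Lemma \ref{jul14l1} cannot be invoked for every $x$ by cases. Two remedies are available.

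\begin{itemize}
\item \textbf{Direct argument.} We avoid the lemma's hypothesis $x\neq\xi$ altogether. For $z\in B(y,t^{2}r)$, write $z=t w+(1-t)x$ with $w=\xi+(z-y)/t$. Then $\Vert w-\xi\Vert<tr<r$, so $w\in B(\xi,r)\subset C$, and convexity gives $z\in C$. This uses only $x\in C$ and is simpler than going through Lemma \ref{jul14l1}; it is the route I would actually write.
\item \textbf{Case split.} Alternatively, compare $\Vert x-\xi\Vert$ with $\varepsilon/2$. If $\Vert x-\xi\Vert>0$, apply the lemma as above. If $\Vert x-\xi\Vert<\varepsilon/2$, then $B(\xi,\min\{r,\varepsilon/2\})\subset C\cap B(x,\varepsilon)$, so taking $\delta\leqslant\min\{r,\varepsilon/2\}$ handles this case too.
\end{itemize}

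Either way, $\delta=\min\{t^{2}r,\varepsilon/2\}$ works uniformly in $x$. Hence $C^{\circ}$ is uniformly dense in $C$.
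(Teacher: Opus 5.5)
Your proof is correct, and your preferred direct argument takes a different and simpler route from the paper's. The paper's proof is essentially your second remedy. With $C\subset B(\xi,R)$, $t=\varepsilon/2R$ and $\delta=t^{2}r$, it splits on $\Vert x-\xi\Vert<r/4$, whence $B(x,\delta)\subset B(\xi,r)\subset C$, or $x\neq\xi$. In the second case Lemma~\ref{jul14l1} gives $B(t\xi+(1-t)x,\delta)\subset C$, and a triangle-inequality estimate puts this ball inside $B(x,\varepsilon)$.

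Your direct argument instead uses the fact that $x$ itself lies in $C$. The homothety $v\mapsto tv+(1-t)x$ therefore carries $B(\xi,r)$ into $C$ by convexity alone. This removes both the case distinction and the appeal to Lemma~\ref{jul14l1}, and it even yields the larger ball $B(y,tr)\subset C$.

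The paper's route has the merit of reusing a standard lemma that only requires $B(x,tr)$ to meet $C$. The same proof would therefore cover points of $\overline{C}$, at the price of the smaller radius $t^{2}r$. But because that lemma is stated with the hypothesis $x\neq\xi$, the paper must split off the case $\Vert x-\xi\Vert<r/4$, and your homothety argument shows that step is unnecessary for $x\in C$.
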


\begin{proof}
\label{ere}Fix\textsf{\ }$\xi\in C$ and $r>0$ such that $B(\xi,r)\subset C$.
There exists $R>r$ such that $C\subset B(\xi,R)$. Given $\varepsilon$ with
$0<\varepsilon<R$, let $t=\varepsilon/2R$ and $\delta=t^{2}r$; then $0<t<1$.
For each $x\in C$ either $\left\Vert x-\xi\right\Vert <r/4$ or $x\neq\xi$. In
the first case, for each $y\in B(x,\delta)$ we have%
\[
\left\Vert y-\xi\right\Vert \leqslant\left\Vert y-x\right\Vert +\left\Vert
x-\xi\right\Vert <t^{2}r+\frac{r}{4}<\left(  \frac{\varepsilon}{R}\right)
^{2}\frac{r}{4}+\frac{r}{4}<r,
\]
so $B(x,\delta)\subset B(\xi,r)\subset C$. In the case $x\neq\xi$, write
$z\equiv t\xi+(1-t)x$. Since $x\in C\cap B(x,tr)$, Lemma \ref{jul14l1} tells
us that $B(z,\delta)\subset C$. On the other hand, if $y\in B(z,\delta)$, then%
\[
\left\Vert x-y\right\Vert \leqslant\left\Vert x-z\right\Vert +\left\Vert
z-y\right\Vert <t\left\Vert x-\xi\right\Vert +\delta<tR+t^{2}r<2tR=\varepsilon
.
\]
Hence $B(z,\delta)\subset C\cap B(x,\varepsilon)$. Thus in either case, there
is a ball of radius $\delta$ contained in $C\cap B(x,\varepsilon)$.
\end{proof}

\begin{lemma}
\label{0105a3}Let $\left\{  x_{1},\ldots,x_{N}\right\}  $ be a basis of
$\mathbb{R}^{N}$. Then $\tfrac{1}{N+1}%
{\textstyle\sum_{k=1}^{N}}
x_{k}$ is in the interior of the convex hull of $\{0,x_{1},\ldots,x_{N}\}$.
\end{lemma}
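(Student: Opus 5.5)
Our plan is to reduce to the standard simplex by a linear change of coordinates, and then exhibit an explicit ball about the barycentre inside the hull.

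Let $T:\mathbb{R}^{N}\rightarrow\mathbb{R}^{N}$ be the linear map with $T(e_{k})=x_{k}$ for $1\leqslant k\leqslant N$, where $\left\{ e_{1},\ldots,e_{N}\right\} $ is the standard basis. Since $\left\{ x_{1},\ldots,x_{N}\right\} $ is a basis, $T$ is invertible. Constructively this is fine: the matrix of $T$ has columns $x_{k}$, and its determinant is nonzero, in the constructive sense of being apart from $0$. So $T^{-1}$ exists and is linear, hence bounded, say $\Vert T^{-1}z\Vert\leqslant M\Vert z\Vert$. Because $T$ is linear, it maps the convex hull $\Delta$ of $\{0,e_{1},\ldots,e_{N}\}$ onto the convex hull $C$ of $\{0,x_{1},\ldots,x_{N}\}$. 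It also maps $\beta\equiv\frac{1}{N+1}\sum e_{k}$ to $\frac{1}{N+1}\sum x_{k}$.

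Next we show that $\beta$ is interior to $\Delta$. The set $\Delta$ consists exactly of the points $y$ with $y_{k}\geqslant0$ for each $k$ and $\sum_{k}y_{k}\leqslant1$. Such a $y$ is the convex combination $\sum y_{k}e_{k}+(1-\sum y_{k})\cdot0$, so this description needs no inverse direction. The point $\beta$ has coordinates $\frac{1}{N+1}$, and the sum of its coordinates is $\frac{N}{N+1}$. Take $s=\frac{1}{N(N+1)}$. If $\Vert y-\beta\Vert<s$, then each $|y_{k}-\frac{1}{N+1}|<s$, so
\[
y_{k}>\frac{1}{N+1}-s\geqslant0\quad\text{and}\quad\sum_{k}y_{k}<\frac{N}{N+1}+Ns=1.
\]
Hence $B(\beta,s)\subset\Delta$.

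Finally we transport this ball back to $C$. Put $r=s/M$. If $\Vert z-T\beta\Vert<r$, then $\Vert T^{-1}z-\beta\Vert<s$, so $T^{-1}z\in\Delta$ and therefore $z\in T(\Delta)=C$. Thus $B(T\beta,r)\subset C$, which is the claim.

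The only step needing care is the constructive invertibility of $T$, together with an explicit bound $M$ on $T^{-1}$. I would handle it by invoking the standard constructive fact that a linearly independent set of $N$ vectors in $\mathbb{R}^{N}$ has determinant apart from $0$, as in \cite{BVtech}. Cramer's rule then gives $T^{-1}$ with explicitly bounded entries, and hence an explicit $M$. Everything else is elementary estimation.
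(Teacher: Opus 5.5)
Your proof is correct and is essentially the paper's argument. The paper's norm $\|\cdot\|'$ is just $z\mapsto\|T^{-1}z\|_1$, and the constant $c$ it obtains from equivalence of norms plays the role of your bound $M$ on $T^{-1}$ from the determinant and Cramer's rule; both then check that the barycentric coordinates stay positive with sum less than $1$, and your bound $y_k>\frac{1}{N+1}-s\geqslant 0$ is actually more careful than the paper's, whose claimed $0<\frac{1}{n+1}-\frac{1}{n(n+1)}$ fails when $N=1$.
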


\begin{proof}
We can unambiguously define a norm $\left\Vert \ \right\Vert ^{\prime}$ on
$\mathbb{R}^{N}$ by $\left\Vert \sum_{k=1}^{N}\lambda_{k}x_{k}\right\Vert
^{\prime}=\sum_{k=1}^{N}\left\vert \lambda_{k}\right\vert $. By the
equivalence of norms on finite-dimensional spaces, there exists $c>0$ such
that $\left\Vert \ \right\Vert ^{\prime}\leqslant c\left\Vert \ \right\Vert $,
where the latter norm is the original one on $X$. Let $x\equiv\sum_{k=1}%
^{n}\lambda_{k}x_{k}\in X$ satisfy%
\[
\ \left\Vert x-b_{n}\right\Vert <r\equiv\frac{1}{n(n+1)c}.
\]
Then%
\[
\sum_{k=1}^{n}\left\vert \lambda_{k}-\frac{1}{n+1}\right\vert =\left\Vert
x-b_{n}\right\Vert ^{\prime}<\frac{1}{n(n+1)},
\]
so for each $k$,%
\[
0<\frac{1}{n+1}-\frac{1}{n(n+1)}<\lambda_{k}<\frac{1}{n+1}+\frac{1}{n(n+1)}%
\]
and therefore%
\[
\sum_{k=1}^{n}\lambda_{k}<\frac{n}{n+1}+\frac{n}{n(n+1)}=1.
\]
Thus $x$ belongs to the convex hull of $\left\{  0,x_{1},\ldots,x_{n}\right\}
$. Hence $B\,(b_{n},r)$ in $\mathbb{R}^{N}$ is a subset of that convex hull.
\end{proof}

\begin{lemma}
\label{jul17l1}Let $\mathbf{S}$ be a measurable complemented set in
$\mathbb{R}^{N}$ with positive (possibly infinite) measure, such that $S^{1}$
is convex. Then the interior of $\ S^{1}$ is inhabited.
\end{lemma}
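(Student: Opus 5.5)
By Lemma \ref{jul02l1}, $S^{1}$ contains a basis of $\mathbb{R}^{N}$. That alone is not quite enough, because Lemma \ref{0105a3} needs the origin as one of the vertices of the simplex. So I would first arrange to have $N+1$ affinely independent points in $S^{1}$, and then translate.

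The cleanest route is to take an integrable $\mathbf{B}<\mathbf{S}$ with $\mu(\mathbf{B})>0$. By \cite[Ch. 6, (3.4)]{BB}, $B^{1}$ contains a point $a$. Now consider the translated complemented set $\mathbf{B}-a\equiv(B^{1}-a,B^{0}-a)$. Translation invariance of Lebesgue measure makes it integrable with the same positive measure. Applying Lemma \ref{jul02l1} to it gives a basis $x_{1},\ldots,x_{N}$ of $\mathbb{R}^{N}$ with each $a+x_{k}\in B^{1}\subset S^{1}$.

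If one prefers to avoid invoking translation invariance, the proof of Lemma \ref{jul02l1} can be rerun directly. Having found $a\in B^{1}$, one constructs inductively $a+x_{k}\in B^{1}$ using the affine subspace $a+\mathsf{span}\{x_{1},\ldots,x_{n}\}$. This affine subspace also has measure zero, by a translated version of Lemma \ref{jul03l1}, built from translated rectangles. Then $\mu(\mathbf{B}-\mathbf{V}\wedge\mathbf{B})>0$ supplies a point of $B^{1}$ bounded away from that affine subspace, exactly as before.

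With the $N+1$ points $a,a+x_{1},\ldots,a+x_{N}$ in $S^{1}$ in hand, Lemma \ref{0105a3} applies. It shows that $b\equiv\frac{1}{N+1}\sum_{k=1}^{N}x_{k}$ lies in the interior of the convex hull $K$ of $\{0,x_{1},\ldots,x_{N}\}$, so there is $r>0$ with $B(b,r)\subset K$. Since $S^{1}$ is convex and contains $a+\{0,x_{1},\ldots,x_{N}\}$, it contains the translate $a+K$. Therefore $B(a+b,r)\subset a+K\subset S^{1}$, and $a+b$ inhabits the interior of $S^{1}$.

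The main obstacle is the translation step: obtaining the base point $a$ together with a basis of \emph{differences} $x_{k}$, rather than just a basis of $\mathbb{R}^{N}$ inside $S^{1}$. I would first try the cheap translation-invariance argument. If that requires citing facts not in \cite{BB}, I would fall back on the affine version of Lemmas \ref{jul03l1} and \ref{jul02l1}, which is purely routine. The convexity step itself is immediate, since a convex set contains the convex hull of any finite subset of it.
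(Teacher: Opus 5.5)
Your proposal is correct and follows essentially the same route as the paper, which says ``translating $S^{1}$ if necessary, we may assume that $0\in S^{1}$'' and then applies Lemma \ref{jul02l1} and Lemma \ref{0105a3} exactly as you do. Choosing $a\in B^{1}$ and translating $\mathbf{B}$ simply makes that one-line translation step explicit, using that positive measure yields an inhabitant and that translation preserves integrability and measure; your affine fallback is therefore not needed.
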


\begin{proof}
Translating $S^{1}$ if necessary, we may assume that\textbf{\ }$0\in S^{1}$.
By Lemma \ref{jul02l1}, $S^{1}$ contains a basis $\left\{  x_{1},\ldots
,x_{N}\right\}  $ of $\mathbb{R}^{N}$. By Lemma \ref{0105a3}, $b\equiv\frac
{1}{N+1}\sum_{n=1}^{N}x_{n}$\ belongs to the interior of the convex hull of
$\left\{  0,x_{1},\ldots,x_{N}\right\}  $ and hence to the interior of $S^{1}$.
\end{proof}

%

\medskip

Now we have our first major proof.%

\medskip

\begin{proof}
[Proof of Theorem 1]By Lemma \ref{jul17l1}, there exist $s_{0}\in S^{1}$ and
$r>0$ such that $B(s_{0},r)\subset S^{1}$. To begin with, assume
that\textbf{\ }$S^{1}$ is bounded. Then $\mathbf{S}$ is integrable, by Lemma
\ref{jul22l1}, and $\mu(\mathbf{S})\geqslant\mu(\mathbf{B}(s_{0},r))>0$. By
Lemma \ref{0105a2}, $\left(  S^{1}\right)  ^{\circ}$ is uniformly dense in
$S^{1}$. Given $\varepsilon>0$, pick $\delta>0$ such that if $x\in S^{1}$,
then $S^{1}\cap B(x,\varepsilon)$ contains a ball in $\mathbb{R}^{N}$ of
radius $\delta$. Applying \cite[Ch. 6, (6.7)]{BB}, construct a compact set $K$
such that $\mathbf{K}\equiv(K,-K)$ is integrable, $K\subset S^{1}$, and
$\mu(\mathbf{S}-\mathbf{K})<c$. Let $F$ be a finite $\varepsilon
$-approximation to $K$, and consider any $x\in S^{1}$. There exists $z\in
S^{1}$ such that $B(z,\delta)\subset S^{1}\cap B(x,\varepsilon)$. If
$\rho\left(  x,K\right)  >\varepsilon$, then $B(x,\varepsilon)\subset-K$, so
$B(z,\delta)\subset S^{1}-K$, $\mathbf{B}(z,\delta)<$ $\mathbf{S}-\mathbf{K}$,
and therefore $\mu\left(  \mathbf{S}-\mathbf{K}\right)  \geqslant
\mu(\mathbf{B}(z,\delta))$, a contradiction. Hence $\rho\left(  x,K\right)
<2\varepsilon$ and therefore $\rho(x,F)<3\varepsilon$. Thus $F$ is a finite
$3\varepsilon$-approximation to $S^{1}$. Since $\varepsilon$ is arbitrary, it
follows that $S^{1}$ is totally bounded and hence located.

Now consider the general case, in which $S^{1}$ need not be bounded. Given
$a\in\mathbb{R}^{N}$, choose%
\[
R>4\left\Vert a-s_{0}\right\Vert +2\left\Vert s_{0}\right\Vert
\]
such that $B(s_{0},r)\subset B(0,R)$, and let $\mathbf{A}=\mathbf{S}%
\wedge\mathbf{B}(0,R)$. Then $\mathbf{A}$ is integrable (by Lemma
\ref{jul21l1}), $\mathbf{B}(s,r)<\mathbf{A}$, and therefore $\mu
(\mathbf{A})\geqslant\mu(\mathbf{B}(s,r))>0$. Since $A^{1}$ is convex and
bounded, it follows from the first paragraph of this proof that $A^{1}$ is
located in $\mathbb{R}^{N}$. Consider any $x\in S^{1}$. Either $\left\Vert
x\right\Vert <R$, in which case $x\in A^{1}$ and $\left\Vert a-x\right\Vert
\geqslant\rho(a,A^{1})$, or else $\left\Vert x\right\Vert >R/2$. In the latter
case,%
\begin{align*}
\left\Vert a-x\right\Vert  &  \geqslant\left\Vert x\right\Vert -\left\Vert
a-s_{0}\right\Vert -\left\Vert s_{0}\right\Vert \\
&  >\frac{R}{2}-\left\Vert a-s_{0}\right\Vert -\left\Vert s_{0}\right\Vert \\
&  >\left\Vert a-s_{0}\right\Vert \\
&  \geqslant\rho(a,A^{1})\text{.}%
\end{align*}
Since $A^{1}\subset S^{1}$, it follows that $\rho(a,S^{1})$ exists and equals
$\rho(a,A^{1})$.
\end{proof}

\begin{corollary}
\label{jul17c1}Let $\mathbf{S}$ be a measurable complemented set with positive
(possibly infinite) measure in $\mathbb{R}^{N}$, such that $S^{1}$ is bounded
and convex. If $%
\mathord{\sim}%
S^{1}$ is inhabited, then it is located in $\mathbb{R}^{N}$.
\end{corollary}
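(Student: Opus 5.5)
We will in fact prove more than the Corollary asks, namely that $\rho(a,\mathord{\sim}S^{1})$ is given by an explicit formula built from the support function of $S^{1}$. By Lemma \ref{jul17l1} and the proof of Theorem \ref{jul21t1}, there are $s_{0}\in S^{1}$ and $r>0$ with $B(s_{0},r)\subset S^{1}$. Since $S^{1}$ is bounded, the first paragraph of that proof shows that $S^{1}$ is totally bounded. Hence for each unit vector $u$ the \emph{support function}
\[
h(u)\equiv\sup\{\langle s,u\rangle:s\in S^{1}\}
\]
exists. Moreover $h$ is uniformly continuous on the compact unit sphere $\Sigma$, because $|h(u)-h(v)|\leqslant M\|u-v\|$ when $S^{1}\subset B(0,M)$. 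So for each $a\in\mathbb{R}^{N}$ the number
\[
d(a)\equiv\max\Bigl(0,\inf_{u\in\Sigma}\bigl(h(u)-\langle a,u\rangle\bigr)\Bigr)
\]
exists. The plan is to prove $\rho(a,\mathord{\sim}S^{1})=d(a)$.

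\emph{Lower bound.} Suppose $d(a)>0$ and $\|x-a\|<d(a)$. Then $x$ lies at positive distance inside every supporting half-space of $S^{1}$. The first task is to show that $x$ belongs to the closure $K$ of $S^{1}$. I expect to get this from the total boundedness of $S^{1}$ by a constructive separation argument. If $\rho(x,S^{1})>0$, a nearest-point-type construction on the finite approximations of $S^{1}$ yields a unit vector $u$ with $\langle x,u\rangle>h(u)$, which is a contradiction. Since $\rho(x,S^{1})$ exists, we conclude $\rho(x,S^{1})=0$.

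The second task is to upgrade this to $x\in S^{1}$. Here $x$ lies in $K$ with a margin, since a small ball $B(x,\eta)$ is also inside $K$. Take the point $x'=x+t(x-s_{0})$ for small $t>0$; it lies in $B(x,\eta)\subset K$, so $B(x',tr)$ meets $S^{1}$. Writing $x$ as the convex combination $\frac{1}{1+t}x'+\frac{t}{1+t}s_{0}$, Lemma \ref{jul14l1} places $x$ in $S^{1}$. Therefore every $y\in\mathord{\sim}S^{1}$, which by definition differs from every point of $S^{1}$, satisfies $\|y-a\|\geqslant d(a)$.

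\emph{Upper bound.} Given $\varepsilon>0$, either $d(a)>0$ or $d(a)<\varepsilon$.

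In the first case, still given $\varepsilon$, choose $u\in\Sigma$ with $h(u)-\langle a,u\rangle<d(a)+\varepsilon$. Then $y\equiv a+(d(a)+2\varepsilon)u$ satisfies $\langle y,u\rangle\geqslant h(u)+\varepsilon$. Hence $\|y-s\|\geqslant\varepsilon$ for all $s\in S^{1}$, so $y\in-S^{1}\subset\mathord{\sim}S^{1}$, and $\|y-a\|=d(a)+2\varepsilon$.

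In the second case, the infimum term is less than $\varepsilon$, so we can choose $u\in\Sigma$ with $h(u)-\langle a,u\rangle<\varepsilon$, and the same construction gives a point of $\mathord{\sim}S^{1}$ within $2\varepsilon$ of $a$.

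Combining the lower and upper bounds, $\rho(a,\mathord{\sim}S^{1})$ exists and equals $d(a)$. The hypothesis that $\mathord{\sim}S^{1}$ is inhabited is then only formally needed, because the construction above already produces points of the complement.

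The main obstacle is the lower bound, specifically the constructive step from ``$x$ lies strictly inside every supporting half-space'' to ``$x\in K$''. I would first try a direct approximation: take a finite $\delta$-approximation $F$ of $S^{1}$ and separate $x$ from the compact convex hull of $F$ by a unit vector. The available tool is that the nearest point of a finitely generated polytope can be approximated, which should suffice since all quantities are strict inequalities with explicit margins. If that proves awkward, a fallback is to work with the gauge (Minkowski functional) of $S^{1}$ about $s_{0}$. By total boundedness this gauge is uniformly continuous, so the distance from $a$ to its sublevel complement can be computed directly.
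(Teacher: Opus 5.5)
Your argument is correct, but it takes a genuinely different route from the paper.

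\textbf{The paper's route.} The paper argues structurally from cited results:
\begin{enumerate}
\item density of $(S^{1})^{\circ}$ in $S^{1}$ gives $-S^{1}=-(S^{1})^{\circ}$;
\item Lemma 5.1.4 of \emph{Techniques of Constructive Analysis}, applied to the open convex set $(S^{1})^{\circ}$, makes $-S^{1}$ dense in $\mathord{\sim}S^{1}$;
\item Theorem \ref{jul21t1}, together with item 4.1.15 of that book (an inhabited metric complement of a located convex set in $\mathbb{R}^{N}$ is located), finishes.
\end{enumerate}

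\textbf{Your route.} You compute $\rho(a,\mathord{\sim}S^{1})$ explicitly. Your lower bound says that every point within $d(a)$ of $a$ lies in $S^{1}$; you get it by separation from $\overline{S^{1}}$ plus Lemma \ref{jul14l1}, and it does by hand the work of the two density results. Your upper bound manufactures points of $-S^{1}$ from nearly minimizing directions $u$, and it replaces 4.1.15.

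\textbf{What each approach buys.} Yours is self-contained. The only outside input is the standard constructive nearest-point/separation fact for a compact convex subset of $\mathbb{R}^{N}$, which follows from the parallelogram law, so your sketch there is not a gap. You also get the extra information $\rho(a,\mathord{\sim}S^{1})=\rho(a,-S^{1})=d(a)$.

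The paper's route, as far as one can see, never uses boundedness. Even inhabitedness of $-S^{1}$ follows from its density in the inhabited set $\mathord{\sim}S^{1}$. This matters because the proof of Theorem \ref{jul21t2} applies this corollary to an $S^{1}$ that need not be bounded. Your support function, by contrast, is finite and continuous on the sphere only because $S^{1}$ is bounded.

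\textbf{Two small repairs.}
\begin{itemize}
\item Lemma \ref{jul14l1} is applied with parameter $\tau=t/(1+t)$, so you need $B(x',\tau r)$, not $B(x',tr)$, to meet $S^{1}$. This is harmless, since $x'\in\overline{S^{1}}$ means every ball about $x'$ meets $S^{1}$.
\item The lemma's hypothesis $x'\neq s_{0}$ needs the usual case split. Either $\|x-s_{0}\|<r$, and then $x\in B(s_{0},r)\subset S^{1}$ at once. Or $\|x-s_{0}\|>r/2$, and then $x'\neq s_{0}$.
\end{itemize}
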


\begin{proof}
By Lemma \ref{jul17l1}, $\left(  S^{1}\right)  ^{\circ}$ is inhabited, so,
$S^{1}$ being convex, $\left(  S^{1}\right)  ^{\circ}$ is dense in $S^{1}$
\cite[Propn 10]{dsb26}. Hence $-S^{1}=-\left(  S^{1}\right)  ^{\circ}$.
Applying Lemma 5.1.4 of \cite{BVtech} to the inhabited, open, convex set
$\left(  S^{1}\right)  ^{\circ}$, we see that $-S^{1}$ is dense in $%
\mathord{\sim}%
\left(  S^{1}\right)  ^{\circ}$. Since $-S^{1}\subset%
\mathord{\sim}%
S^{1}\subset%
\mathord{\sim}%
\left(  S^{1}\right)  ^{\circ}$, it follows that $-S^{1}$ is dense in $%
\mathord{\sim}%
S^{1}$. Referring to Theorem \ref{jul21t1}, we see that $S^{1}$ is a located
convex subset of $\mathbb{R}^{N}$ such that $-S^{1}$ is inhabited; whence, by
\cite[4.1.15]{BVtech}, the dense subset $-S^{1}$ of $%
\mathord{\sim}%
S^{1}$ is located in $\mathbb{R}^{N}$, from which the result follows.
\end{proof}

%

\medskip

\label{2207}In the case $N=1$ we can remove from Theorem \ref{jul21t1} the
hypothesis that $\mathbf{S}$ have positive (possibly infinite) measure. Here
is a simpler proof than the one given in \cite[Thm. 3]{Br88}.

\begin{proposition}
\label{jul21p1}Let $S$ be an inhabited, measurable, convex subset of
$\mathbb{R}$. Then $S^{1}$ is located in $\mathbb{R}$.
\end{proposition}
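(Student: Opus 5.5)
The plan is to prove locatedness directly, by computing $\rho(a,S)$ from the measures of the integrable sets $\mathbf{S}\wedge([u,v],-[u,v])$. Here $\mathbf{S}=(S,S^{0})$ is the measurable complemented set whose first component is $S$. Fix $s_{0}\in S$.

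It suffices to show that both half-sets $S_{+}\equiv S\cap[s_{0},\infty)$ and $S_{-}\equiv S\cap(-\infty,s_{0}]$ are located. Then $\rho(a,S)=\min\{\rho(a,S_{+}),\rho(a,S_{-})\}$, since $S=S_{+}\cup S_{-}$ up to the decidable-free observation that every $x\in S$ satisfies either $x>s_{0}-\eta$ or $x<s_{0}$ for small $\eta$. If $x<s_{0}$, then $x\in S_{-}$. If $x>s_{0}-\eta$, convexity lets us compare $x$ with a point of $S_{+}$ within $\eta$, so the infimum is unaffected in the limit. By symmetry we only treat $S_{+}$. For $a\leqslant s_{0}$ we have $\rho(a,S_{+})=s_{0}-a$ trivially. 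Since $a\mapsto\rho(a,S_{+})$ must be $1$-Lipschitz, it is enough to compute it for $a>s_{0}$ and then handle arbitrary $a$ by the case split $a<s_{0}+\eta$ or $a>s_{0}$.

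So let $a>s_{0}$, and let $\mathbf{I}\equiv([s_{0},a],-[s_{0},a])$, which is integrable. By Lemma~\ref{jul21l1}, $\mathbf{S}\wedge\mathbf{I}$ is integrable; write $m\equiv\mu(\mathbf{S}\wedge\mathbf{I})\in[0,a-s_{0}]$. The claim is that
\[
\rho(a,S_{+})=a-s_{0}-m .
\]
For the lower bound, take $x\in S_{+}$. If $x\leqslant a$, then convexity gives $[s_{0},x]\subset S$, so $([s_{0},x],-[s_{0},x])<\mathbf{S}\wedge\mathbf{I}$, hence $x-s_{0}\leqslant m$ and $a-x\geqslant a-s_{0}-m$. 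If $x>a$, then $[s_{0},a]\subset S$, so $m=a-s_{0}$ and the bound $|a-x|\geqslant0$ is what is needed. Since these two cases are not decidable, I would phrase the argument as: $|a-x|<a-s_{0}-m$ is contradictory in each of the cases $x<a+\eta$ and $x>a$, and this suffices for a nonstrict inequality.

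For the upper bound, given $\varepsilon>0$ we need $x\in S_{+}$ with $a-x<a-s_{0}-m+\varepsilon$. Either $m<\varepsilon$, in which case $x=s_{0}$ works, or $m>\varepsilon/2$. In the latter case, let $t\equiv s_{0}+m-\varepsilon/2$. Then
\[
\mu\bigl(\mathbf{S}\wedge([t,a],-[t,a])\bigr)=m-\mu\bigl(\mathbf{S}\wedge([s_{0},t],-[s_{0},t])\bigr)\geqslant m-(t-s_{0})=\varepsilon/2>0,
\]
using additivity \cite[Ch. 6, (3.6)]{BB} together with Lemma~\ref{jul21l1} (or monotonicity). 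By \cite[Ch. 6, (3.4)]{BB}, the set $S\cap[t,a]$ is inhabited by some $x$, and this $x$ satisfies $a-x\leqslant a-s_{0}-m+\varepsilon/2$.

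The main obstacle is the purely constructive bookkeeping in the lower bound and in gluing $S_{+}$ with $S_{-}$. We cannot decide whether $x\leqslant a$ or whether $x\geqslant s_{0}$, so every comparison has to be carried out via approximate case splits and the fact that nonstrict inequalities may be proved by contradiction. The measure-theoretic content is light: the only tool needed is that a positive-measure integrable set is inhabited, plus monotonicity and additivity of $\mu$.
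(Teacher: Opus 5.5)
Your argument is correct, but it follows a genuinely different route from the paper. The paper first reduces to bounded $S^{1}$, so that $\mathbf{S}$ is integrable by Lemma~\ref{jul22l1}. Then, for each $\varepsilon>0$, it splits on $\mu(\mathbf{S})>0$ versus $\mu(\mathbf{S})<\varepsilon/2$. In the first alternative it simply invokes Theorem~\ref{jul21t1}. In the second, convexity shows that $\{s\}$ is an $\varepsilon$-approximation to $S^{1}$: a point of $S^{1}$ at distance $>\varepsilon/2$ from $s$ would put an interval of length $>\varepsilon/2$ inside $S^{1}$. The unbounded case is handled by the truncation argument from the second paragraph of the proof of Theorem~\ref{jul21t1}.

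You instead compute the distance function explicitly, $\rho(a,S_{+})=a-s_{0}-\mu\bigl(\mathbf{S}\wedge([s_{0},a],-[s_{0},a])\bigr)$ for $a>s_{0}$. This uses only Lemma~\ref{jul21l1}, monotonicity and subadditivity of $\mu$, and \cite[Ch. 6, (3.4)]{BB}. Your proof is therefore self-contained in dimension one. It bypasses Theorem~\ref{jul21t1} and everything behind it, namely Lemmas~\ref{jul02l1}--\ref{jul17l1} and the compact inner approximation \cite[Ch. 6, (6.7)]{BB}. It needs no separate reduction to bounded $S^{1}$. It also avoids a step the paper leaves implicit: combining, $\varepsilon$ by $\varepsilon$, the alternatives ``$S^{1}$ is located'' and ``$S^{1}$ has a one-point $\varepsilon$-approximation'' into locatedness. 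The price is the order-theoretic bookkeeping. Your method is also intrinsically one-dimensional, whereas the paper's proof presents the case $N=1$ as Theorem~\ref{jul21t1} plus a degenerate alternative.

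Three spots need more care than your sketch gives them.

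In the lower bound, convexity yields $[s_{0},\min(x,a)]\subset S^{1}$ only once $x>s_{0}$ is known; otherwise no convex-combination coefficient can be computed. Under the contradiction hypothesis $|a-x|<a-s_{0}-m\leqslant a-s_{0}$, however, you get $x>s_{0}$ for free. Working with $\min(x,a)$ also removes the split $x\leqslant a$ / $x>a$ entirely.

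In the gluing step, the nearby point $\max(x,s_{0})$ is likewise not constructively known to lie in $S^{1}$, so convexity is the wrong tool there. Instead, prove $|a-x|\geqslant\min\{\rho(a,S_{+}),\rho(a,S_{-})\}$ by contradiction. Split into the cases $x>s_{0}$, $x<s_{0}$, or $|x-s_{0}|<\delta$, and in the last case use $s_{0}\in S_{+}$.

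Finally, passing from the cases $a>s_{0}$ and $a\leqslant s_{0}$ to arbitrary $a$ rests on a standard fact: a set is located once its distance function exists on a dense subset. The paper itself uses this fact in the proof of Theorem~\ref{jul21t2}.
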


\begin{proof}
It will suffice to consider the case where $S^{1}$ is bounded, in which case
$\mathbf{S}$ is integrable: for we can then deal with the general case by a
simpler variant of the argument in the second paragraph of the proof of
Theorem \ref{jul21t1}. Pick $s\in S^{1}$. Given $\varepsilon>0$, we have
either $\mu(\mathbf{S})>0$, when Theorem \ref{jul21t1} applies, or else
$\mu(\mathbf{S})<\varepsilon/2$. In the latter case, if $x\in S^{1}$ and
$\left\vert x-s\right\vert >\varepsilon/2$, then since $S^{1}$, being convex,
contains all points of the interval between $s$ and $x$, so $\mu
(\mathbf{S})>\varepsilon/2$, a contradiction; whence $\left\vert
x-s\right\vert <\varepsilon$ for all $x\in S^{1}$ and therefore $\left\{
s\right\}  $ is an $\varepsilon$-approximation to $S^{1}$.
\end{proof}

%

\medskip

Here is a Brouwerian example\footnote{%
\normalfont\sf
An improvement on the one on page 419 of \cite{Br88}.} showing that we cannot
drop the positive (possibly infinite) measure hypothesis in the case $N>1$
even if we know that $S^{1}$ is inhabited. Let $\left(  a_{n}\right)
_{n\geqslant1}$ be an increasing binary sequence with $a_{1}=0$. For each $n$
construct a compact, convex subset $S_{n}$ of $\mathbb{R}^{2}$ such that

\begin{itemize}
\item[--] if $a_{n}=0$, then $S_{n}=\left[  0,1\right]  \times\left\{
0\right\}  $;

\item[--] if $a_{n}=1-a_{n-1}$ and $n$ is odd, then $S_{k}=\left[  0,2\right]
\times\left\{  0\right\}  $ for all $k\geqslant n$;

\item[--] if $a_{n}=1-a_{n-1}$ and $n$ is even, then for all $k\geqslant n$,
$S_{n}$ is the closed triangle with vertices $\left(  0,0\right)  ,\left(
1,0\right)  ,\left(  1,2/n\right)  $.
\end{itemize}

%

\noindent
Then $\mathbf{S}_{n}\equiv\left(  S_{n},-S_{n}\right)  $ is integrable for
each $n$. Let $\mathbf{S}=%
{\textstyle\bigvee_{n\geqslant1}}
\mathbf{S}_{n}$. Since $S_{n}\subset S_{n+1}$ for each $n$, $S^{1}$ is
inhabited and convex. To prove that $\mathbf{S}$ is integrable, for each $k$
let $\mathbf{A}_{k}$ be the integrable set $%
{\textstyle\bigvee_{n=1}^{k}}
\mathbf{S}_{n}$; then $A_{1}^{1}\subset A_{2}^{1}\subset\cdots$. Moreover, if
$j\geqslant k$, then either $a_{j}=a_{k}$ and $\mu(\mathbf{A}_{j}%
)=\mu(\mathbf{A}_{k})$, or else $a_{k}=0$ and $a_{j}=1$. In the latter event,
$\mu(A_{k})=0$ and there exists $N$ such that $k<N\leqslant j$ and
$a_{N}=1-a_{N-1}$; if $N$ is odd, then $\mu(A_{j})=\mu(A_{N})=0$; if $N$ is
even, then $\mu(A_{j})=\mu(A_{N})=1/N$. Thus in all cases, $\left\vert
\mu(\mathbf{A}_{j})-\mu(\mathbf{A}_{k})\right\vert \leqslant1/k$. Hence
$\left(  \mu(\mathbf{A}_{k})\right)  _{k\geqslant1}$ is a Cauchy sequence and
therefore converges to a limit in $\mathbb{R}$. It follows from \cite[Ch. 6,
(3.8)]{BB} that $\mathbf{S}$ is integrable. On the other hand, if $S^{1}$ is
located, then either $\rho\left(  \left(  2,0\right)  ,S^{1}\right)  >0$ or
$\rho\left(  \left(  2,0\right)  ,S^{1}\right)  <1$. In the first case, if
$a_{m}=1-a_{m-1}$, then $m$ cannot be odd; in the second, if $a_{m}=1-a_{m-1}%
$, then $m$ cannot be even. We now see that if Theorem \ref{jul21t1} holds
without the hypothesis $\mu(\mathbf{S})>0$, then we can prove the following principle:

\begin{quote}
For each increasing binary sequence $\left(  a_{n}\right)  _{n\geqslant1}$
with $a_{1}=0$, either the least $m$, if there is one, with $a_{m}=1-a_{m-1}$
is even, or else the least such $m$ is odd.
\end{quote}

%

\noindent
This principle is easily shown to be equivalent to the well-known omniscience
principle\footnote{%
\normalfont\sf
See page 9 of \cite{BVtech}.} \textsf{LLPO} and is therefore essentially nonconstructive.%

\medskip

We now turn to Theorem \ref{jul21t2}, which strengthens Corollary
\ref{jul17c1}. We shall need a simple variant of the constructive
least-upper-bound principle \cite[2.1.18]{BVtech}: namely, that if$\ A$ is an
inhabited subset of $\mathbb{R}$ such that for all $\beta\in\mathbb{R}$ either
$\inf A$ exists or else there exists $x\in A$ with $x<\beta$, then $\inf S$ exists.%

\medskip

\begin{proof}
[Proof of Theorem 2]Since $S^{1}\cup%
\mathord{\sim}%
S^{1}$, being a full set, is dense in $\mathbb{R}^{N}$, it will suffice to
prove that $\rho(x,%
\mathord{\sim}%
S^{1})$ exists for each $x$ in $S^{1}$. Given such $x$, from Lemma
\ref{jul21l1} we see that $\mathbf{S\wedge B}(x,r)$ is integrable for all
$r>0$. For each positive integer $n$ let $v_{n}=\mu(\mathbf{B}(0,n^{-1}%
))\ $and construct an increasing binary sequence $\left(  \lambda_{n}\right)
_{n\geqslant1}$ such that for each $n$,%
\begin{align*}
\lambda_{n}=0  &  \Rightarrow\mu(\mathbf{S}\wedge\mathbf{B}\left(
x,n^{-1}\right)  )<v_{n},\\
\lambda_{n}=1  &  \Rightarrow\mu(\mathbf{S}\wedge\mathbf{B}\left(
x,n^{-1}\right)  )>0.
\end{align*}
In view of Corollary \ref{jul17c1}, we may assume that $\lambda_{1}=0$. If
$\lambda_{n}=0$, then by \cite[Ch. 6, (3.6)]{BB},%
\[
\mu(\mathbf{B}\left(  x,n^{-1}\right)  -\mathbf{S})=v_{n}-\mu(\mathbf{S}%
\wedge\mathbf{B}\left(  x,n^{-1}\right)  )>0,
\]
so, by \cite[Ch. 6, (3.4)]{BB}, we can choose $x_{n}\in S^{0}\subset%
\mathord{\sim}%
S^{1}$ with $\left\Vert x_{n}-x\right\Vert <1/n$. If $\lambda_{n}=1$, we set
$x_{n}=x_{n-1}$. If $m\geqslant n$, then%
\[
\left\Vert x_{m}-x_{n}\right\Vert \leqslant\left\Vert x_{m}-x\right\Vert
+\left\Vert x_{n}-x\right\Vert <\frac{1}{m}+\frac{1}{n}\leqslant2/n;
\]
so $\left(  x_{n}\right)  _{n\geqslant1}$ is a Cauchy sequence and therefore
converges to a limit $x_{\infty}$ in the closure of $%
\mathord{\sim}%
S^{1}$. Consider any $\alpha,\beta$ with $0<\alpha<\beta$. Either $\left\Vert
x_{\infty}-x\right\Vert >\alpha$ or $\left\Vert x_{\infty}-x\right\Vert
<\beta$. In the first case, there exists $m$ such that $\left\Vert
x_{m}-x\right\Vert >1/m$; then $\lambda_{m}=1$, $\mu(\mathbf{S})\geqslant$
$\mu(\mathbf{S}\wedge\mathbf{B}\left(  x,m^{-1}\right)  )>0$, and by Corollary
\ref{jul17c1}, $%
\mathord{\sim}%
S^{1}$ is located. In the second case, there exists $n$ such that $\left\Vert
x_{n}-x\right\Vert <\beta$, where $x_{n}\in%
\mathord{\sim}%
S^{1}$. It follows from the remark immediately preceding this theorem that $%
\mathord{\sim}%
S^{1}$ is located.
\end{proof}

%

\medskip

In view of our earlier Brouwerian example, we see from Theorem \ref{jul21t2}
that if, in Theorem \ref{jul21t1}, we replace the hypothesis `$\mu
(\mathbf{S})>0$' by `$%
\mathord{\sim}%
S^{1}$ is inhabited', then we can prove that the latter set is located even if
we don't know whether $S^{1}$ itself is located.%

\bigskip

%

\bigskip
%

\bigskip
%

\bigskip
%

\noindent
\textbf{{\small Author's address:}}{\small \ Department of Mathematics \&
Statistics, University of Canterbury, Christchurch 8140, New Zealand}%

\noindent
\textbf{{\small Author's email:} \ \thinspace}%
\texttt{{\small dugbridges@gmail.com}}

\bigskip

\vfill

\begin{flushright}
{\small \texttt{dsbridges 23 July 2026}}
\end{flushright}

\end{document}